\DeclareMathOperator{\Ric}{Ric}
\newcommand{\cc}{\widetilde{c}}
\newcommand{\cg}{\widetilde{g}}
\newcommand{\og}{\overline{g}}
\newcommand{\cf}{\widetilde{f}}
\newcommand{\cm}{\widetilde{m}}
\newcommand{\lp}{\langle}
\newcommand{\rp}{\rangle}
\DeclareMathOperator{\cD}{\widetilde{\Delta}}
\newtheorem{thm}{Theorem}[section]
\newtheorem{lem}[thm]{Lemma}
\newtheorem{cor}[thm]{Corollary}
\theoremstyle{definition}
\theoremstyle{remark}
\numberwithin{equation}{section}
\begin{document}

\title[On the nonexistence of quasi-Einstein metrics]{On the nonexistence of quasi-Einstein metrics}
\author{Jeffrey S. Case}
\thanks{Partially supported by the AAS and NSF grant OISE-0812807}
\address{Department of Mathematics\\University of California\\Santa Barbara, CA 93106}
\email{casej@math.ucsb.edu}
\date{}
\subjclass[2000]{Primary 53C21; Secondary 58J60}
\begin{abstract}
We study complete Riemannian manifolds satisfying the equation $\Ric+\nabla^2f-\frac{1}{m}df\otimes df=0$ by studying the associated PDE $\Delta_f f +m\mu \exp(2f/m)=0$ for $\mu\leq 0$.  By developing a gradient estimate for $f$, we show there are no nonconstant solutions.  We then apply this to show that there are no nontrivial Ricci flat warped products with fibers which have nonpositive Einstein constant.  We also show that for nontrivial steady gradient Ricci solitons, the quantity $R+|\nabla f|^2$ is a positive constant.
\end{abstract}
\maketitle

\section{Introduction}

An interesting question posed by Besse \cite{Besse} is that of determining when one can construct examples of Einstein manifolds which are warped products.  If $(M,g)$ and $(N^m,h)$ are Riemannian manifolds, the warped product $(M\times N, \og)$, where $\og=g\oplus \exp(-2f/m)h$, is Einstein if and only if $(N,h)$ is Einstein and
\begin{align}
\label {ec1} \Ric_f^m & = \lambda g \\
\label {ec2} \Delta_f f - m\lambda & = -m\mu \exp\left(\frac{2}{m}f\right) ,
\end{align}
where $\Ric(h)=\mu h$,$\Ric(\og)=\lambda\og$, 
\[ \Ric_f^m=\Ric+\nabla^2f-\frac{1}{m}df\otimes df \]
is the Bakry-\'Emery-Ricci tensor, $\nabla^2$ denotes the Hessian and $\Delta_f u = \Delta u - \nabla f\cdot\nabla u$.  We will call $f$ the potential.  As a result of the Bianchi identity, Kim and Kim~\cite{Kim_Kim} proved that \eqref{ec1} implies \eqref{ec2} for some constant $\mu$, and thus one can study Einstein warped product manifolds by studying only \eqref{ec1} on the base $(M,g)$.

If one takes $m=\infty$ in \eqref{ec1}, one is studying gradient Ricci solitons.  In this case, the Bianchi identity yields 
\begin{equation}
\label{sc1}
\Delta_f f + 2\lambda f = -\mu
\end{equation}
for some constant $\mu$ (cf.\ \cite{Ivey_1994}).  The sign here is chosen so that one can view \eqref{sc1} as the limit $m\to\infty$ of \eqref{ec2}.

In \cite{Case_Shu_Wei}, a quasi-Einstein metric $g$ is defined as a metric such that $\Ric_f^m(g)=\lambda g$ for some constant $\lambda$, where $0<m\leq\infty$.  The observation of Kim and Kim together with its generalization to the $m=\infty$ case allows us to study the nonexistence of nontrivial warped product Einstein metrics by considering only \eqref{ec2} and \eqref{sc1}, where a quasi-Einstein metric is nontrivial if the potential is nonconstant.  This will be the point of view of this paper.

Examples of quasi-Einstein manifolds with $\lambda<0$ and $\mu$ of arbitrary sign are constructed in Besse~\cite{Besse}, as well as examples with $\lambda=0$ and $\mu\geq 0$ in \eqref{ec1} and \eqref{ec2}.  Moreover, in the latter case, all of the nontrivial examples have $\mu>0$, while the trivial quasi-Einstein metric $\Ric_f^m=0$ necessarily satisfies $\mu=0$.  More recently, L\"u, Page and Pope~\cite{LPP} constructed nontrivial quasi-Einstein metrics with $\lambda>0$ and $m>1$, which also satisfy $\mu>0$.  On the other hand, it is known that if $m<\infty$ and $\lambda>0$, then $M$ is necessarily compact (cf.\ \cite{Wei_Wylie_survey}).  Thus, the maximum principle applied to \eqref{ec2} yields that $\mu>0$.  From these results, all that remains to be understood is whether there are nontrivial quasi-Einstein metrics with $\lambda=0$ and $\mu\leq 0$.

For steady gradient Ricci solitons, this question is also interesting.  The Bryant and Ivey steady solitons~ \cite{Ivey_1994} are all examples of nontrivial steady gradient Ricci solitons with $\mu>0$.  Dancer and Wang~\cite{DancerWang2009} later generalized the construction to provide an even larger class of examples of nontrivial steady gradient Ricci solitons, and they too all had $\mu>0$.  Thus one is also led to wonder if there are nontrivial steady gradient Ricci solitons with $\mu\leq 0$.

In this paper, we address this question, and show that indeed there are no such quasi-Einstein metrics.  Specifically, we prove the following:

\begin{thm}
\label{main_thm_simple}
Let $(M,g)$ be a complete Riemannian manifold such that $\Ric_f^m=0$ for some smooth function $f$ and $0\leq m\leq\infty$, and let $\mu$ be the constant given by
\begin{equation}
\label{pde_simple}
\Delta_f f + \mu \exp\left(\frac{2}{m}f\right) = 0 .
\end{equation}
Then $\mu\geq 0$, and equality holds if and only if $(M,g)$ is Ricci flat.
\end{thm}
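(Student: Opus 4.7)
The plan is to rule out $\mu<0$ via a Cheng--Yau-type estimate for
\[
\Phi := |\nabla f|^2 - \mu\exp\!\left(\tfrac{2}{m}f\right),
\]
which by \eqref{pde_simple} is identically $\Delta f$. Note that $\Phi\geq -\mu\exp(2f/m)\geq 0$ whenever $\mu\leq 0$, with strict inequality at every point when $\mu<0$. It will therefore suffice to force $\Phi\equiv 0$: this is an immediate contradiction if $\mu<0$, and when $\mu=0$ it implies $|\nabla f|\equiv 0$, so $f$ is constant and $\Ric=\Ric_f^m=0$. The proof has two ingredients: a Bochner-type differential inequality for $\Phi$, and a cutoff argument using $\Ric_f^m\geq 0$.

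For the differential inequality I would begin with the weighted Bochner identity adapted to $\Ric_f^m$,
\[
\tfrac{1}{2}\Delta_f|\nabla f|^2 = |\nabla^2 f|^2 + \lp\nabla\Delta_f f,\nabla f\rp + \Ric_f^m(\nabla f,\nabla f) + \tfrac{1}{m}|\nabla f|^4,
\]
and drop the $\Ric_f^m$ term by hypothesis. Differentiating \eqref{pde_simple} along $\nabla f$ gives $\lp\nabla\Delta_f f,\nabla f\rp = -\tfrac{2\mu}{m}|\nabla f|^2\exp(2f/m)$, while the chain rule $\Delta_f(g\circ f)=g'(f)\Delta_f f+g''(f)|\nabla f|^2$ gives an explicit formula for $\Delta_f\exp(2f/m)$. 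Assembling $\tfrac{1}{2}\Delta_f\Phi = \tfrac{1}{2}\Delta_f|\nabla f|^2 - \tfrac{\mu}{2}\Delta_f\exp(2f/m)$ and collecting, the cross-terms in $|\nabla f|^2$ and $\exp(2f/m)$ organize into the perfect square $\tfrac{1}{m}\Phi^2$, leaving a remainder $-\tfrac{2\mu}{m^2}|\nabla f|^2\exp(2f/m)$ that is nonnegative when $\mu\leq 0$. Combined with $|\nabla^2 f|^2\geq (\Delta f)^2/n = \Phi^2/n$ this yields
\[
\Delta_f\Phi \geq \tfrac{2(n+m)}{nm}\,\Phi^2,
\]
valid for $0<m\leq\infty$ in the obvious limit.

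The final step is a cutoff argument of the classical Cheng--Yau form. Because $\Ric_f^m\geq 0$, the Wei--Wylie weighted Laplacian comparison yields $\Delta_f r\leq (n+m-1)/r$ in the barrier sense, from which one constructs a smooth cutoff $\eta$ equal to $1$ on $B_{R/2}(p)$, supported in $B_R(p)$, and satisfying $|\nabla\eta|^2\leq C/R^2$ and $\Delta_f\eta\geq -C/R^2$. At an interior maximum of $\eta^2\Phi$, $\nabla(\eta^2\Phi)=0$ gives $\eta\nabla\Phi = -2\Phi\nabla\eta$; substituting into $0\geq \Delta_f(\eta^2\Phi)$ together with the differential inequality above yields, after the standard rearrangement, $\eta^2\Phi\leq C'/R^2$ on $B_R(p)$. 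Hence $\Phi\leq C'/R^2$ on $B_{R/2}(p)$, and sending $R\to\infty$ forces $\Phi\equiv 0$, as required. The main obstacle I anticipate is the case $m=\infty$, where the Wei--Wylie comparison is unavailable in this form; here the Bochner identity still gives $\tfrac{1}{2}\Delta_f\Phi = |\nabla^2 f|^2\geq \Phi^2/n$ (since $\Delta_f f = -\mu$ is constant), but closing the cutoff argument requires a separate device to handle the weighted distance Laplacian, presumably via Hamilton's soliton identity $R+|\nabla f|^2 = \mu$ already highlighted in the introduction.
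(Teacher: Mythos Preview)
For $0<m<\infty$ your argument is correct and parallels the paper's closely: both derive a Bochner-type differential inequality (you for $\Phi=\Delta f$, the paper for $|\nabla f|^2$ directly, obtaining $\Delta_f|\nabla f|^2\geq\tfrac{2}{n}|\nabla f|^4$ under the hypothesis $\phi'+\tfrac{2}{n}\phi\geq0$), then run a Cheng--Yau cutoff using the weighted Laplacian comparison $\Delta_f r\leq(m+n-1)/r$ and let the radius tend to infinity. Your choice of $\Phi$ in place of $|\nabla f|^2$ is a harmless variation; your algebra checks out, and indeed $\tfrac{1}{2}\Delta_f\Phi=|\nabla^2f|^2+\tfrac{1}{m}\Phi^2-\tfrac{2\mu}{m^2}e^{2f/m}|\nabla f|^2$ as you claim.

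The case $m=\infty$, however, is a genuine gap that you flag but do not close. Invoking the soliton identity $R+|\nabla f|^2=\mu$ does not help: concluding $\mu\geq0$ from it requires $R\geq0$, which is precisely the external Chen/Zhang input the paper mentions, and in any event the identity gives no control on $\Delta_f r$, so your cutoff still cannot close. The paper's device is different and is the missing idea. For any finite $\cm$, set $\cg=\exp\bigl(-\tfrac{2}{\cm+n-2}f\bigr)g$ and $\cf=\tfrac{\cm}{\cm+n-2}f$; a direct computation gives $\Ric_{\cf}^{\cm}(\cg)\geq\Ric_f(g)+\tfrac{1}{\cm+n-2}(\Delta_f f)\,g\geq0$ and $\cD_{\cf}\cf=\cc_1 e^{\cc_2\cf}$ with $\cc_1,\cc_2\geq0$, so the finite-$\cm$ gradient estimate applies on $(M,\cg)$. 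The subtlety is that $(M,\cg)$ need not be complete, and here the explicit $m$-dependence of the bound, $|\nabla f|^2\leq 2n(m+n+6)/a^2$, is essential: along a putative incomplete unit-speed $\cg$-geodesic $\gamma$ of finite $\cg$-length $R$, applying the estimate on balls $B(\gamma(t),(R-t)/2)$ yields $|\widetilde\nabla\cf|(\gamma(t))\leq C\sqrt{\cm}/(R-t)$, and integrating shows the $g$-length of $\gamma$ is finite as soon as $C/\sqrt{\cm}<1$. Choosing $\cm$ large then contradicts completeness of $g$, so $(M,\cg)$ is complete and the finite-$\cm$ Liouville result finishes the proof. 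This conformal reduction from $\Ric_f\geq0$ to $\Ric_{\cf}^{\cm}(\cg)\geq0$ with finite $\cm$ is exactly the ingredient your proposal lacks.
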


In fact, we shall actually prove the following slightly stronger theorem:

\begin{thm}
\label{main_thm}
Let $(M,g)$ be a complete Riemannian manifold such that $\Ric_f^m\geq0$ for some smooth function $f$ and $0\leq m\leq\infty$, and suppose that
\begin{equation*}
\label{pde}
\Delta_f f = c_1 \exp(c_2 f)
\end{equation*}
for constants $c_1,c_2\geq 0$.  Then $f$ is constant.
\end{thm}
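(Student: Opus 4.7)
The plan is to follow the strategy announced in the abstract: derive a pointwise Bochner-type inequality for $u:=|\nabla f|^2$ and then upgrade it to $u\equiv 0$ via a localized gradient estimate.

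The starting point is the $m$-Bakry--\'Emery Bochner identity
\[
\tfrac{1}{2}\Delta_f |\nabla f|^2 \;=\; |\nabla^2 f|^2 + \tfrac{1}{m}|\nabla f|^4 + \Ric_f^m(\nabla f,\nabla f) + \lp\nabla f,\nabla\Delta_f f\rp,
\]
read with $1/m=0$ when $m=\infty$. Combining $|\nabla^2 f|^2\geq(\Delta f)^2/n$ with the elementary Cauchy--Schwarz inequality $a^2/n+b^2/m\geq(a-b)^2/(n+m)$ applied to $(a,b)=(\Delta f,|\nabla f|^2)$, one obtains the refined bound $|\nabla^2 f|^2 + \tfrac{1}{m}|\nabla f|^4 \geq (\Delta_f f)^2/(n+m)$. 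Substituting the PDE $\Delta_f f = c_1 e^{c_2 f}$ and invoking $\Ric_f^m\geq 0$ together with $c_1,c_2\geq 0$ then yields
\[
\tfrac{1}{2}\Delta_f u \;\geq\; \frac{c_1^2 e^{2c_2 f}}{n+m} + c_1 c_2 e^{c_2 f}\,u.
\]
In particular $u$ is $f$-subharmonic, but this alone is too weak on a general complete $M$.

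The second step is a Yau-style localized maximum-principle argument. Fix $p\in M$, let $r$ denote the distance to $p$, and choose a standard cutoff $\eta$ with $\eta\equiv 1$ on $B(p,R)$, $\supp\eta\subset B(p,2R)$, and $|\nabla\eta|^2/\eta$ and $-\Delta_f\eta$ bounded by $C/R^2$. The lower bound on $\Delta_f\eta$ uses the Wei--Wylie $f$-Laplacian comparison theorem under $\Ric_f^m\geq 0$. Evaluating $G:=\eta^2 u$ at a maximum point $x_0$ and combining $\nabla G(x_0)=0$ with $\Delta_f G(x_0)\leq 0$, the Bochner inequality above, and Cauchy--Schwarz to absorb the cross terms, one arrives at an estimate of the shape $\sup_{B(p,R)} u \leq C(n,m)/R^2$, modulo contributions from the $e^{c_2 f}$ source terms that retain a favorable sign. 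Sending $R\to\infty$ then forces $u\equiv 0$, so $f$ is constant.

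The main obstacle I expect is twofold. First, for $m=\infty$ the weighted Laplacian comparison under $\Ric_f^m\geq 0$ is considerably weaker than its finite-$m$ counterpart, so one likely needs to split into the cases $m<\infty$ and $m=\infty$, handling the latter by first establishing a preliminary a priori bound on $|\nabla f|$, perhaps via a bootstrap from a weaker gradient estimate. Second, the positive source terms $c_1^2 e^{2c_2 f}/(n+m)$ and $c_1 c_2 e^{c_2 f} u$ in the Bochner inequality could grow uncontrollably if $f$ is unbounded above; the gradient estimate must be designed to \emph{exploit} rather than fight this positivity. The natural remedy is to replace $G=\eta^2 u$ by a test function of the form $G=\eta^2\bigl(u+A e^{c_2 f}\bigr)$ (or $G=\eta^2 u^{\alpha} e^{\beta f}$), using the computable identity $\Delta_f e^{c_2 f}=c_1 c_2 e^{2c_2 f} + c_2^2 e^{c_2 f} u$ to generate cancellations that close the estimate.
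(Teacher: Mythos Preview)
Your Bochner step throws away exactly the term that makes the gradient estimate close. By combining $(\Delta f)^2/n$ with $|\nabla f|^4/m$ into $(\Delta_f f)^2/(n+m)$ you lose the quartic $|\nabla f|^4$, and what remains,
\[
\tfrac12\Delta_f u \;\geq\; \frac{c_1^2 e^{2c_2 f}}{n+m} + c_1 c_2 e^{c_2 f} u,
\]
is just $f$-subharmonicity with a nonnegative source. No choice of test function $\eta^2(u+Ae^{c_2 f})$ or $\eta^2 u^\alpha e^{\beta f}$ will manufacture the $u^2$ term you need to absorb the cutoff errors and produce $\sup_{B(p,R)} u \leq C/R^2$; your own computation of $\Delta_f e^{c_2 f}$ shows only terms linear in $u$ appear. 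The fix is elementary: do \emph{not} merge the two quartic contributions. Instead expand $(\Delta f)^2/n = (\Delta_f f + u)^2/n \geq u^2/n + \tfrac{2}{n}\phi(f)\,u$, and observe that $\langle\nabla f,\nabla\Delta_f f\rangle = \phi'(f)\,u$. The hypothesis $c_1,c_2\geq 0$ gives $\phi' + \tfrac{2}{n}\phi\geq 0$, so these cross terms combine with good sign and you obtain $\Delta_f u \geq \tfrac{2}{n}u^2$. With this inequality the Yau-type localization (using $\Delta_f r \leq (m+n-1)/r$ for $m<\infty$) runs cleanly and yields $|\nabla f|^2(x)\leq 2n(m+n+6)/R^2$.

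For $m=\infty$ your bootstrap sketch is not a plan: the Wei--Wylie comparison under $\Ric_f\geq 0$ involves $f$ itself (or oscillation of $f$), so you cannot get the needed $\Delta_f r$ bound without already controlling $f$, and there is no weaker estimate to seed the iteration. The paper's device is entirely different and you are missing it. One performs the conformal change $\tilde g = e^{-2f/(\tilde m+n-2)}g$, $\tilde f = \tfrac{\tilde m}{\tilde m+n-2}f$ for a finite $\tilde m$; a direct computation shows $\Ric_{\tilde f}^{\tilde m}(\tilde g)\geq 0$ and $\tilde\Delta_{\tilde f}\tilde f$ again has the form $\tilde c_1 e^{\tilde c_2 f}$ with nonnegative constants. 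The finite-$\tilde m$ gradient estimate now applies on $\tilde g$-balls, but $(M,\tilde g)$ is not a priori complete. The explicit $\tilde m$-dependence of the bound, $|\tilde\nabla\tilde f|\lesssim \tilde m^{1/2}/R$, is exactly what saves you: integrating along a putative incomplete $\tilde g$-geodesic shows its $g$-length is finite once $\tilde m$ is large, contradicting completeness of $(M,g)$. This conformal reduction is the missing idea; your proposal contains no substitute for it.
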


We first note that in Theorem~\ref{main_thm_simple}, the condition~\eqref{pde_simple} encompasses both \eqref{ec2} and \eqref{sc1}, where the exponential term is understood to be equal to one if $m=\infty$.  We also note that in Theorem~\ref{main_thm_simple} for the case $m=\infty$, the definition of $\mu$ given by~\eqref{pde_simple} is equivalent to the definition
\[ R + |\nabla f|^2 = \mu . \]
Thus, in this case our result would follow immediately from knowing that the scalar curvature $R\geq 0$.  As was pointed out to the author by McKenzie Wang after an early version of this paper was made available, this has been shown in the context of ancient solutions of the Ricci flow by B.-L.\ Chen~\cite{Chen2007}, and explicitly in our setting by Z.\ Zhang~\cite{Zhang2008}.  However, this result does not imply Theorem~\ref{main_thm}.  Theorem~\ref{main_thm} is also of interest in that it is an example of a result whereby one can ``take the limit'' $m\to\infty$, even though the underlying Laplacian comparison result does not extend.

Together with the maximum principle result for quasi-Einstein metrics with $\lambda>0$, Theorem~\ref{main_thm_simple} then yields the following partial answer to the question posed by Besse:

\begin{cor}
Let $M\times N$ be a nontrivial Einstein warped product manifold with nonnegative scalar curvature.  Then at least one of $M$ and $N$ is Einstein with positive scalar curvature.
\end{cor}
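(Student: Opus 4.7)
The plan is to show that in either sign regime for the ambient Einstein constant $\lambda$, the fiber Einstein constant $\mu$ must be strictly positive; once this is known, $(N^m,h)$ is the desired Einstein factor with positive scalar curvature $m\mu$. Writing the warped product as $\og=g\oplus\exp(-2f/m)h$ with $\Ric(\og)=\lambda\og$, the Kim--Kim reduction produces equations \eqref{ec1} and \eqref{ec2} on the base $(M,g)$; the nonnegative scalar curvature hypothesis gives $\lambda\geq 0$, and nontriviality gives that the potential $f$ is nonconstant.

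For the case $\lambda>0$, I would invoke the fact cited in the introduction that, for $m<\infty$, the condition $\Ric_f^m=\lambda g$ with $\lambda>0$ forces $M$ to be compact. Then $f$ attains a maximum at some point $p$, at which $\nabla f(p)=0$ and $\Delta f(p)\leq 0$, so $\Delta_f f(p)\leq 0$; substituting into \eqref{ec2} gives $\mu\exp(2f(p)/m)\geq\lambda>0$, and in particular $\mu>0$.

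For the case $\lambda=0$, the base satisfies $\Ric_f^m=0$ and \eqref{ec2} takes the form $\Delta_f f+m\mu\exp(2f/m)=0$, so Theorem~\ref{main_thm_simple} applies directly and yields $\mu\geq 0$, with equality only if $(M,g)$ is Ricci flat. To exclude $\mu=0$, I would combine $\Ric=0$ with $\Ric_f^m=0$ to obtain $\nabla^2 f=\tfrac{1}{m}df\otimes df$; tracing yields $\Delta f=\tfrac{1}{m}|\nabla f|^2$, and comparing with $\Delta_f f=0$ (read off from \eqref{ec2} with $\lambda=\mu=0$) forces $|\nabla f|\equiv 0$ when $m\geq 2$, contradicting nontriviality. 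Equivalently, Theorem~\ref{main_thm} applied with $c_1=0$ delivers $f$ constant in one step. I do not expect a serious obstacle: once Theorem~\ref{main_thm_simple} is in hand, the corollary is essentially bookkeeping across the two sign cases for $\lambda$, with the sole borderline possibility $\lambda=\mu=0$ being excluded by the short argument above.
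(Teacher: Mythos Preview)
Your proposal is correct and follows exactly the route the paper indicates in the sentence preceding the corollary: split on the sign of $\lambda$, use compactness plus the maximum principle when $\lambda>0$, and invoke Theorem~\ref{main_thm_simple} (with the rigidity clause, or equivalently Theorem~\ref{main_thm} with $c_1=0$) to force $\mu>0$ when $\lambda=0$. The only cosmetic point is that your direct trace argument for excluding $\mu=0$ breaks down at $m=1$, but you already note that Theorem~\ref{main_thm} handles all $m$ uniformly, so there is no gap.
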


Another interesting corollary to our theorem, pointed out to us by Yujen Shu, deals with conformally Einstein Riemannian products.  If $(M,g)$ and $(N,h)$ are Riemannian manifolds, then the manifold $M\times N$ is a conformally Einstein Riemannian product if the standard product metric is conformally Einstein.  Using a characterization of Cleyton \cite{Cleyton} of such manifolds, we have the following corollary.

\begin{cor}
Suppose that $M\times N$ is a complete conformally Ricci flat Riemannian product.  Then either one of $M$ or $N$is Einstein with positive Einstein constant or both $M$ and $N$ are Ricci flat.
\end{cor}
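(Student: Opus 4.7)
The plan is to invoke Cleyton's characterization of conformally Einstein Riemannian products to reduce the hypothesis to the quasi-Einstein framework of \eqref{ec1}--\eqref{ec2} with $\lambda=0$, and then apply Theorem~\ref{main_thm_simple} directly.

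Concretely, Cleyton's classification constrains such products as follows: after possibly exchanging the roles of the two factors, there exist a smooth function $f$ on $M$ and a constant $\mu$ such that $(N,h)$ is Einstein with $\Ric(h)=\mu h$, and $(M,g)$ satisfies $\Ric_f^m=0$ together with
\[ \Delta_f f + m\mu\exp\left(\frac{2}{m}f\right) = 0, \]
where $m=\dim N$. This is precisely the system governing a Ricci-flat warped product with Einstein fiber, so the conformal Ricci flatness of the product $M\times N$ repackages as the $\lambda=0$ case of \eqref{ec1}--\eqref{ec2}.

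With this reduction in hand, Theorem~\ref{main_thm_simple} applies and yields $\mu\geq 0$. If $\mu>0$, then $(N,h)$ is Einstein with positive Einstein constant, which is the first alternative. If instead $\mu=0$, the equality clause of Theorem~\ref{main_thm_simple} forces $(M,g)$ to be Ricci flat, while $(N,h)$ is Ricci flat since $\Ric(h)=\mu h=0$; this is the second alternative.

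The only step beyond citation is unpacking Cleyton's characterization into exactly this quasi-Einstein system---in particular verifying that the potential $f$ lives on one factor and that the Einstein constant of the other factor plays the role of $\mu$---which I expect to be a short bookkeeping exercise rather than a genuine obstacle. Once the translation is made, Theorem~\ref{main_thm_simple} together with its equality statement finishes the proof in a single line.
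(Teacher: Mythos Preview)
Your overall strategy---reduce via Cleyton's characterization to the $\lambda=0$ quasi-Einstein system and then invoke Theorem~\ref{main_thm_simple}---is exactly what the paper has in mind, and your case split on the sign of $\mu$ is the right way to finish.

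One point deserves more care than ``bookkeeping,'' though. The duality between conformal products and warped products (see Section~\ref{conformal}) shows that when the conformal factor depends only on $M$, the Ricci-flat metric is a warped product over the \emph{rescaled} base $(M,\cg)$ with $\cg=e^{-2f/(m+n-2)}g$, and it is $\cg$, not the original factor metric $g$, that carries the equation $\Ric_{\cf}^m=0$. Indeed, unwinding Lemma~\ref{lem:change} shows that $(M,g)$ itself satisfies $\Ric+\nabla^2 f+\tfrac{1}{m+n-2}\,df\otimes df=\mu g$, which has the wrong sign on the $df\otimes df$ term to be quasi-Einstein in the sense of Theorem~\ref{main_thm_simple}. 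So your sentence ``$(M,g)$ satisfies $\Ric_f^m=0$'' is not literally correct.

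This matters because Theorem~\ref{main_thm_simple} requires completeness of the metric satisfying the quasi-Einstein equation, i.e.\ of $(M,\cg)$. Completeness of the product metric $g\oplus h$ gives completeness of $(M,g)$, but not automatically of $(M,\cg)$. If ``complete'' in the hypothesis is read as applying to the conformal Ricci-flat metric, then $(M,\cg)$ is complete because the base of a complete warped product is complete, and your argument goes through. Make sure your unpacking of Cleyton's result tracks which metric is assumed complete and that this passes to $(M,\cg)$.
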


Our result is originally motivated by a result of Anderson~\cite{Anderson_1999} for static metrics in general relativity (cf. \cite[Chapter 6]{Wald_book}).  In that setting, static metrics are Riemannian triples $(M^3,g,u)$, where $u$ is called the static potential, such that
\begin{align*}
u\Ric & = \nabla^2 u\\
\Delta u & = 0 .
\end{align*}
This yields a Ricci-flat spacetime metric $-u^2 dt^2 \oplus g$ on the product $\mathbb{R}\times M$.  In the language of quasi-Einstein metrics, static metrics are then just quasi-Einstein metrics $(M^3,g,f)$ with the constants $\lambda=0=\mu$, where $u=e^{-f}$.  Using PDE methods, Anderson proved that if $(M,g,u)$ is a complete static metric with $u>0$, then $u$ must in fact be constant.  One of our original observations was that, using comparison results for manifolds with $\Ric_f^m\geq 0$ and the Harnack inequality for the $f$-Laplacian of Li~\cite{Li_XD_2005}, his proof generalizes almost immediately to the case when $\lambda=\mu=0$ and $m<\infty$, thus providing the original inspiration to study nonexistence of quasi-Einstein metrics using PDE methods.

In order to extend this result to the steady gradient Ricci soliton case $m=\infty$, as well as to rule out the possibility $\mu<0$, we instead focus on the gradient estimate that leads to the Harnack inequality (cf.\ the Schoen-Yau gradient estimate~\cite[Theorem 3.1]{Schoen_Yau_book}).  More precisely, we arrive at the following estimate:

\begin{thm}
\label{gradient}
Let $(M^n,g,f,m)$ be such that $\Ric_f^m\geq 0$, $m<\infty$, and $\Delta_f f = \phi(f)$, where $\phi\colon\mathbb{R}\to\mathbb{R}$ is such that
\[ \phi^\prime(t) + \frac{2}{n}\phi(t) \geq 0 \]
for all $t\in\mathbb{R}$.  Then for all $x\in M$ and $a>0$ such that $B(x,a)$ is geodesically connected in $M$ and the closure $\overline{B(x,a)}$ is compact,
\[ |\nabla f|^2(x) \leq \frac{2n(m+n+6)}{a^2} . \]
\end{thm}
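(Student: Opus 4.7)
The strategy is a Bochner--Schoen--Yau style argument: derive a differential inequality $\Delta_f u \geq 2u^2/n$ for $u = |\nabla f|^2$ from the Bakry--\'Emery Bochner formula, then apply the maximum principle to a localized quantity $G = \eta^2 u$ on $B(x,a)$, using the Laplacian comparison theorem for $\Ric_f^m \geq 0$ to control the cutoff terms.

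First I would derive a weighted Bochner identity. Starting from the classical formula for $\tfrac{1}{2}\Delta|\nabla f|^2$, converting $\Delta$ into $\Delta_f = \Delta - \nabla f\cdot\nabla$ and substituting $\Ric + \nabla^2 f = \Ric_f^m + \tfrac{1}{m} df\otimes df$, one obtains
\[
\frac{1}{2}\Delta_f u \;=\; |\nabla^2 f|^2 + \Ric_f^m(\nabla f,\nabla f) + \frac{u^2}{m} + \nabla f\cdot\nabla(\Delta_f f).
\]
Using $\Delta_f f = \phi(f)$, so that $\Delta f = \phi(f)+u$ and $\nabla f\cdot\nabla(\Delta_f f) = \phi'(f)\,u$, dropping the nonnegative $\Ric_f^m$ term, and applying Cauchy--Schwarz $|\nabla^2 f|^2 \geq (\Delta f)^2/n$, one arrives after expansion at
\[
\frac{1}{2}\Delta_f u \;\geq\; \frac{\phi(f)^2}{n} + u\Big(\phi'(f) + \tfrac{2}{n}\phi(f)\Big) + u^2\Big(\tfrac{1}{n}+\tfrac{1}{m}\Big).
\]
The hypothesis on $\phi$ kills the middle term, and dropping the remaining nonnegative $\phi^2/n$ yields $\Delta_f u \geq 2u^2/n$. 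This is the one point at which the sign condition $\phi' + \tfrac{2}{n}\phi \geq 0$ does essential work.

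Next I would localize using the radial cutoff $\eta(y) = 1 - r(y)^2/a^2$, where $r(y) = d(x,y)$, so that $\eta(x)=1$ and $\eta\equiv 0$ on $\partial B(x,a)$. Set $G = \eta^2 u$; by compactness of $\overline{B(x,a)}$ and vanishing on the boundary, $G$ attains its maximum at an interior point $y_0$, and I may assume $G(y_0)>0$. Using $\nabla G(y_0)=0$ to substitute $\nabla u = -2u\nabla\eta/\eta$ into a direct expansion of $\Delta_f G$, and then $\Delta_f G(y_0)\leq 0$, one gets $\eta^2\Delta_f u \leq 6u|\nabla\eta|^2 - 2u\eta\Delta_f\eta$ at $y_0$. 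Combining with $\Delta_f u \geq 2u^2/n$ and dividing by $u$ gives $G(y_0) \leq 3n|\nabla\eta|^2 - n\eta\Delta_f\eta$ at $y_0$.

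The last step is to bound the cutoff terms via the Laplacian comparison. A direct computation gives $|\nabla\eta|^2 = 4r^2/a^4 \leq 4/a^2$ and $\Delta_f\eta = -2/a^2 - (2r/a^2)\Delta_f r$. The Wei--Wylie type comparison $\Delta_f r \leq (m+n-1)/r$ for $\Ric_f^m \geq 0$ with $m<\infty$ then yields $\Delta_f\eta \geq -2(m+n)/a^2$, and hence $-\eta\Delta_f\eta \leq 2(m+n)/a^2$. Assembling,
\[
u(x) \;=\; G(x) \;\leq\; G(y_0) \;\leq\; 3n\cdot\frac{4}{a^2} + n\cdot\frac{2(m+n)}{a^2} \;=\; \frac{2n(m+n+6)}{a^2},
\]
exactly the claimed bound. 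The main technical obstacle is handling the cut locus of $x$: the pointwise inequality $\Delta_f r\leq(m+n-1)/r$ holds only where $r$ is smooth, yet the maximum point $y_0$ may sit in the cut locus. The remedy is Calabi's standard upper-barrier trick, using a minimizing geodesic from $x$ to $y_0$ to produce a smooth supporting function for $r$ at $y_0$; the hypothesis that $B(x,a)$ is geodesically connected (with compact closure) is precisely what ensures the construction stays inside the ball where the comparison theorem applies.
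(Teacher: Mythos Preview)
Your proof is correct and essentially identical to the paper's: the paper uses the test function $F=(a^2-r^2)^2|\nabla f|^2$, which is just $a^4$ times your $G=\eta^2 u$ with $\eta=1-r^2/a^2$, and the Bochner inequality, the Laplacian comparison $\Delta_f r^2\leq 2(m+n)$, and the maximum-point computation all proceed the same way to give the identical constant $2n(m+n+6)/a^2$. Your treatment of the cut locus via Calabi's barrier argument is exactly what the paper invokes in one line as ``the method of support functions.''
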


The crucial aspect of this theorem is the dependence of the gradient estimate on $m$.  Though the gradient estimate itself does not hold when $m=\infty$, we will be able to find a conformally related triple $(M,\cg,\cf)$ which satisfies the hypotheses of Theorem~\ref{gradient} for any $0<\cm<\infty$.  The dependence of the gradient estimate on $m$ will then allow us to prove that $(M,\cg)$ is complete for $\cm$ large enough, which then yields Theorem~\ref{main_thm}.  We also note that there is a natural interpretation for this conformal transformation coming from the relationship between warped product Einstein metrics and conformally Einstein products, which will be discussed in Section~\ref{conformal}.

To fix notation, throughout this paper we will be considering a Riemannian manifold $(M^n,g,f,m)$ which are not necessarily complete.  Unless otherwise stated, we shall also allow $0\leq m\leq \infty$.

The author would like to thank Robert Bartnik and Pengzi Miao for bringing to our attention the connection between static metrics and quasi-Einstein metrics and many interesting discussions, from which the idea for this paper arose.  He would also like to thank Xianzhe Dai, Yujen Shu, and Guofang Wei for helpful discussions on relations with Ricci solitons and comments on early drafts of this paper.  He would also like to thank Monash University for their hospitality while he visited in Summer 2008.  Finally, he would like to thank the reviewer for many helpful comments, and in particular his suggestion that we explicitly state the more general Theorem~\ref{main_thm}.

\section{The Conformal Rescaling}
\label{conformal}

As mentioned in the introduction, we will find it useful to conformally rescale steady gradient Ricci solitons.  The desired rescaling is suggested by the following lemma:

\begin{lem}
\label{lem:change}
Let $(M^n,g)$ be a Riemannian manifold, let $f$ be a smooth function on $M$, and let $0<m<\infty$.  Define the conformally related triple $(M,\cg,\cf)$ by
\begin{align*}
\cg & = \exp\left(-\frac{2}{m+n-2}f\right)g \\
\cf & = \frac{m}{m+n-2}f .
\end{align*}
Then
\begin{align}
\label{eqn:change} \Ric_{\cf}^m(\cg) & = \Ric(g) + \nabla_g^2 f + \frac{1}{m+n-2}df\otimes df + \frac{1}{m+n-2}\Delta_f f\;g \\
\label{eqn:change2} \cD_{\cf} & = \exp\left(\frac{2}{m+n-2}f\right)\Delta_f .
\end{align}
\end{lem}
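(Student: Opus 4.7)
The plan is to reduce both identities to standard conformal-change formulas and then use the specific algebraic identities forced by the choice of exponents. Set $\alpha = \frac{1}{m+n-2}$ and $\sigma = -\alpha f$, so that $\cg = e^{2\sigma} g$ and $\cf = m\alpha f$. The two key algebraic facts that will make everything cancel correctly are $(n-2)\alpha + m\alpha = 1$ and $(m+n-2)\alpha^2 = \alpha$.

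First I would compute $\cRic$ using the standard conformal-change formula for a metric $e^{2\sigma}g$, namely
\[
\cRic = \Ric - (n-2)\nabla^2\sigma - (\Delta\sigma)\,g + (n-2)\,d\sigma\otimes d\sigma - (n-2)|\nabla\sigma|^2\,g,
\]
and substitute $\sigma = -\alpha f$. Next I would use the conformal-change formula for the Hessian applied to the function $\cf = m\alpha f$,
\[
\widetilde{\nabla}^2 \cf = \nabla^2 \cf - d\sigma\otimes d\cf - d\cf\otimes d\sigma + \lp \nabla\sigma,\nabla\cf\rp g,
\]
which produces a $\nabla^2 f$ term plus quadratic terms in $df$. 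Finally, $\frac{1}{m} d\cf\otimes d\cf = m\alpha^2\, df\otimes df$. Adding $\cRic + \widetilde{\nabla}^2\cf - \frac{1}{m}d\cf\otimes d\cf$ and collecting, the coefficient of $\nabla^2 f$ becomes $[(n-2)+m]\alpha = 1$, the coefficient of $df\otimes df$ becomes $[(n-2)+2m-m]\alpha^2 = (m+n-2)\alpha^2 = \alpha$, the coefficient of $|\nabla f|^2 g$ becomes $-\alpha$, and the coefficient of $\Delta f\, g$ is $\alpha$. Grouping the last two as $\alpha(\Delta f - |\nabla f|^2)\,g = \alpha\,\Delta_f f\,g$ gives \eqref{eqn:change}.

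For \eqref{eqn:change2}, I would use the standard conformal-change formula for the Laplacian on functions,
\[
\cD u = e^{-2\sigma}\bigl(\Delta u + (n-2)\lp \nabla\sigma,\nabla u\rp\bigr),
\]
which with $\sigma=-\alpha f$ gives $\cD u = e^{2\alpha f}\bigl(\Delta u - (n-2)\alpha\lp \nabla f,\nabla u\rp\bigr)$. Since $\lp \widetilde{\nabla}\cf,\widetilde{\nabla}u\rp_{\cg} = e^{2\alpha f}\lp \nabla\cf,\nabla u\rp_g = e^{2\alpha f}\, m\alpha\,\lp \nabla f,\nabla u\rp$, subtracting this from $\cD u$ combines the two coefficients into $(n-2)\alpha + m\alpha = 1$, yielding $\cD_{\cf} u = e^{2\alpha f}(\Delta u - \lp \nabla f,\nabla u\rp) = e^{2\alpha f}\Delta_f u$, as desired.

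The computation is entirely routine; the only thing to be careful about is bookkeeping the signs and collecting terms so that the compensation $(m+n-2)\alpha = 1$ appears at the right places. The slight surprise (and the reason this is the right rescaling) is that the particular exponents in the definitions of $\cg$ and $\cf$ are exactly those that make the $\nabla^2 f$ coefficient equal to $1$ in \eqref{eqn:change} and the $\lp \nabla f,\nabla u\rp$ coefficient equal to $1$ in \eqref{eqn:change2} simultaneously; any other choice would leave residual terms. I do not expect any genuine obstacle beyond careful arithmetic.
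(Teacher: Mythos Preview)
Your proposal is correct and follows exactly the approach the paper indicates: the paper omits the proof entirely, saying only that it is ``a straightforward calculation using the well-known formulae for the change of the Ricci curvature and the Hessian of a function under a conformal change of metric, as can be found in~\cite{Besse}.'' You have carried out precisely that calculation, and your bookkeeping of the coefficients (in particular the identities $(m+n-2)\alpha=1$ and $(m+n-2)\alpha^2=\alpha$) is correct.
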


The proof will be omitted, as it is a straightforward calculation using the well-known formulae for the change of the Ricci curvature and the Hessian of a function under a conformal change of metric, as can be found in~\cite{Besse}.  A closely related expression appears in the recent preprint of D.\ Chen~\cite{Chen2009}, where he constructs conformally Einstein Riemannian products.  Indeed, the choice of conformal change comes from the equivalence of the two metrics
\[ \cg\oplus \exp\left(-\frac{2}{m}\cf\right) h = \exp\left(-\frac{2}{m+n-2}f\right)\left(g\oplus h\right), \]
where the constants are chosen so that the Ricci curvature restricted to horizontal vector fields is given by~\eqref{eqn:change}.  This ``duality'' between warped product metrics and conformal Riemannian products is a useful tool for the study of quasi-Einstein metrics, as will be discussed in a forthcoming paper of the author.

For the purposes of this paper, the important feature of the above lemma is that on the right hand side of \eqref{eqn:change}, the coefficient of the quadratic term is \emph{positive}.  In particular, as
\[ \Ric + \nabla^2 f + \frac{1}{m+n-2}df\otimes df \geq \Ric_f, \]
we can the use Lemma~\ref{lem:change} to pass a lower bound for $\Ric_f$ to a lower bound for $\Ric_f^m$ for a conformally related metric.  More precisely, we arrive at the following corollary:

\begin{cor}
\label{conf_relation}
Suppose $(M^n,g,f)$ satisfies $\Ric_f\geq0$ and $\Delta_f f = c_1\exp(c_2 f)$.  Fix $m<\infty$, and define $(M,\cg,\cf)$ as in Lemma~\ref{lem:change}.  Then
\begin{align*}
\Ric_{\cf}^{m}(\cg) & \geq \frac{c_1}{m+n-2} \exp\left((c_2+\frac{2}{m+n-2})f\right)\cg \\
\cD_{\cf} \cf & = \frac{mc_1}{m+n-2}\exp\left((c_2+\frac{2}{m+n-2})f\right)
\end{align*}
In particular, if $c_1\geq 0$, then
\[ \Ric_{\cf}^{m}(\cg) \geq 0 . \]
\end{cor}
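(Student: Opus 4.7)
The plan is to combine Lemma~\ref{lem:change} directly with the hypotheses, keeping track of where the conformal factor $\exp\!\left(\frac{2}{m+n-2}f\right)$ shows up when converting between $g$ and $\cg$.

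First I would address the Ricci inequality. Starting from \eqref{eqn:change}, the right-hand side is
\[ \Ric(g) + \nabla_g^2 f + \frac{1}{m+n-2}df\otimes df + \frac{1}{m+n-2}(\Delta_f f)\,g. \]
The first two terms sum to $\Ric_f(g)\geq 0$ by assumption, and $df\otimes df\geq 0$ as a symmetric 2-tensor, so the first three terms together are at least zero. Substituting the hypothesis $\Delta_f f = c_1\exp(c_2 f)$ into the last term gives the pointwise bound
\[ \Ric_{\cf}^m(\cg) \geq \frac{c_1}{m+n-2}\exp(c_2 f)\,g. \]
Now I would rewrite the right-hand side in terms of $\cg$ using $g = \exp\!\left(\frac{2}{m+n-2}f\right)\cg$; this picks up exactly the extra exponential factor needed to produce the stated coefficient $\exp\!\left((c_2+\tfrac{2}{m+n-2})f\right)\cg$.

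For the Laplacian identity, I would apply \eqref{eqn:change2} to $\cf = \frac{m}{m+n-2}f$. Since $\Delta_f$ is linear and annihilates constants on the coefficient, $\Delta_f \cf = \frac{m}{m+n-2}\Delta_f f = \frac{mc_1}{m+n-2}\exp(c_2 f)$. Then \eqref{eqn:change2} gives
\[ \cD_{\cf}\cf = \exp\!\left(\tfrac{2}{m+n-2}f\right)\cdot\frac{mc_1}{m+n-2}\exp(c_2 f) = \frac{mc_1}{m+n-2}\exp\!\left(\bigl(c_2+\tfrac{2}{m+n-2}\bigr)f\right), \]
which is the claimed formula. The final statement, that $c_1\geq 0$ implies $\Ric_{\cf}^m(\cg)\geq 0$, is then immediate from the pointwise lower bound.

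There is essentially no obstacle here; the corollary is a direct consequence of Lemma~\ref{lem:change} together with the inequality $df\otimes df\geq 0$. The only step requiring even minor care is correctly tracking the conformal factor so that the lower bound is expressed with respect to $\cg$ rather than $g$.
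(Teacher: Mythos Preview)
Your proof is correct and follows essentially the same route as the paper's: drop the nonnegative terms $\Ric_f$ and $\tfrac{1}{m+n-2}df\otimes df$ from \eqref{eqn:change} to get $\Ric_{\cf}^{m}(\cg) \geq \tfrac{1}{m+n-2}(\Delta_f f)\,g$, then read off both claims from the hypotheses and \eqref{eqn:change2}. You have simply made explicit the conversion $g = \exp\!\bigl(\tfrac{2}{m+n-2}f\bigr)\cg$ and the computation of $\cD_{\cf}\cf$, which the paper leaves to the reader.
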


\begin{proof}

The second equality follows immediately from~\eqref{eqn:change2}, and the first inequality follows immediately from~\eqref{eqn:change} and the observation that, under the assumption $\Ric_f\geq 0$,
\[ \Ric_{\cf}^{m}(\cg) \geq \frac{1}{m+n-2}\Delta_f f \; g . \qedhere \]
\end{proof}

\section{Gradient estimate}
\label{comparison}

In this section we establish our gradient estimate for the potential.  As a corollary, we arrive at a Liouville-type theorem which, together with an argument using the conformally related metrics of the previous section, yields Theorem~\ref{main_thm}.  To start, we will need two standard comparison results for the Bakry-\'Emery-Ricci tensor, the Bochner formula and the Laplacian comparison theorem, both of which results can be found, for example, in the survey article \cite{Wei_Wylie_survey}.

\begin{thm}[Bochner formula]
Let $u,f\in C^\infty(M)$.  Then
\[ \frac{1}{2}\Delta_f |\nabla u|^2 = |\nabla^2 u|^2 + \lp\nabla u,\nabla\Delta_f u\rp + \Ric_f^m(\nabla u,\nabla u) + \frac{1}{m}\lp\nabla f,\nabla u\rp^2 . \]
\end{thm}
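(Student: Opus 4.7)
The plan is to derive this weighted Bochner formula from the classical (unweighted) Bochner formula
\[ \tfrac{1}{2}\Delta|\nabla u|^2 = |\nabla^2 u|^2 + \lp\nabla u,\nabla\Delta u\rp + \Ric(\nabla u,\nabla u) \]
by tracking the two correction terms that appear when one replaces $\Delta$ by $\Delta_f = \Delta - \lp\nabla f,\cdot\rp$ on both sides. The strategy is purely algebraic once one expands the definitions, so no global or analytic input is required.

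First I would rewrite the left-hand side as
\[ \tfrac{1}{2}\Delta_f |\nabla u|^2 = \tfrac{1}{2}\Delta|\nabla u|^2 - \tfrac{1}{2}\lp\nabla f,\nabla|\nabla u|^2\rp, \]
and use $\nabla|\nabla u|^2 = 2\nabla^2 u(\nabla u,\cdot)$ to identify the last term with $\nabla^2 u(\nabla u,\nabla f)$. Next, on the right-hand side of the classical Bochner formula, I would expand
\[ \lp\nabla u,\nabla\Delta u\rp = \lp\nabla u,\nabla \Delta_f u\rp + \lp\nabla u,\nabla\lp\nabla f,\nabla u\rp\rp, \]
and compute the last inner product via the Leibniz rule:
\[ \lp\nabla u,\nabla\lp\nabla f,\nabla u\rp\rp = \nabla^2 f(\nabla u,\nabla u) + \nabla^2 u(\nabla f,\nabla u). \]

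Substituting these back into the classical Bochner formula, the two occurrences of $\nabla^2 u(\nabla u,\nabla f)$ cancel, leaving
\[ \tfrac{1}{2}\Delta_f|\nabla u|^2 = |\nabla^2 u|^2 + \lp\nabla u,\nabla\Delta_f u\rp + \bigl(\Ric + \nabla^2 f\bigr)(\nabla u,\nabla u). \]
Finally, I would invoke the definition $\Ric_f^m = \Ric + \nabla^2 f - \tfrac{1}{m}df\otimes df$ to rewrite
\[ \bigl(\Ric + \nabla^2 f\bigr)(\nabla u,\nabla u) = \Ric_f^m(\nabla u,\nabla u) + \tfrac{1}{m}\lp\nabla f,\nabla u\rp^2, \]
which yields the stated identity.

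There is no real obstacle here beyond bookkeeping: the calculation is pointwise and depends only on the symmetry of $\nabla^2 u$ and the standard Bochner identity. The only mildly delicate point is the sign convention for $\Delta_f$ (subtraction rather than addition of $\lp\nabla f,\nabla\cdot\rp$), which must be used consistently on both sides so that the cross term $\nabla^2 u(\nabla f,\nabla u)$ cancels. The appearance of the quadratic correction $\tfrac{1}{m}\lp\nabla f,\nabla u\rp^2$ is exactly the ``defect'' between $\Ric_f$ and $\Ric_f^m$, which is the whole reason the $m$-dependent Bakry--\'Emery tensor is introduced in the first place.
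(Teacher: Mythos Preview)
Your derivation is correct and is the standard one: expand both sides using $\Delta_f=\Delta-\lp\nabla f,\nabla\cdot\rp$, cancel the common cross term $\nabla^2 u(\nabla f,\nabla u)$, and then split $\Ric+\nabla^2 f$ as $\Ric_f^m+\tfrac{1}{m}df\otimes df$. The paper itself does not give a proof of this statement---it simply cites the survey \cite{Wei_Wylie_survey}---so there is nothing further to compare.
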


One can quickly derive from the Bochner formula the Laplacian comparison theorem.  In the following, when we say that $U\subset M$ is geodesically connected in $M$, we mean that for all $p,q\in U$, there is a minimal geodesic connecting $p$ to $q$ which lies in $M$.

\begin{thm}[Laplacian comparison]
Let $U\subset M$ be geodesically connected in $M$ and suppose that $\Ric_f^m\geq 0$ on $M$.  Fix $x\in U$ and let $r$ denote the distance function from $x$.  Then outside the cut locus of $x$ in $U$, we have the estimate
\[ \Delta_f r \leq \frac{m+n-1}{r} . \]
\end{thm}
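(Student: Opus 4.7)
The plan is to apply the Bochner formula to the distance function $u=r$ on a neighborhood of a point $y\in U$ outside the cut locus of $x$, use that $|\nabla r|\equiv 1$ to kill the left-hand side, and extract a Riccati-type ODE for $\phi(r):=\Delta_f r$ along the minimizing geodesic from $x$ to $y$.

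Since $|\nabla r|^2\equiv 1$ wherever $r$ is smooth, we have $\Delta_f|\nabla r|^2=0$, and Bochner yields
\[ 0 = |\nabla^2 r|^2 + \phi'(r) + \Ric_f^m(\nabla r,\nabla r) + \frac{1}{m}\langle\nabla f,\nabla r\rangle^2, \]
where $\langle\nabla r,\nabla(\Delta_f r)\rangle = \phi'(r)$ along the geodesic. Because $\nabla^2 r(\nabla r,\cdot)=0$, the Laplacian $\Delta r$ is the trace of an $(n-1)\times(n-1)$ symmetric matrix, so Cauchy--Schwarz gives $|\nabla^2 r|^2 \geq (\Delta r)^2/(n-1)$. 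Setting $b:=\langle\nabla f,\nabla r\rangle$ so that $\Delta r = \phi(r)+b$, I would then minimize the elementary expression $(\phi+b)^2/(n-1)+b^2/m$ over $b$, which after a short calculation yields
\[ \frac{(\phi+b)^2}{n-1} + \frac{b^2}{m} \geq \frac{\phi(r)^2}{m+n-1}. \]
Substituting this and discarding the nonnegative $\Ric_f^m$ term produces the Riccati inequality $\phi'(r) + \phi(r)^2/(m+n-1) \leq 0$ along the geodesic.

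The final step is to compare $\phi$ with the barrier $\bar\phi(r):=(m+n-1)/r$, which saturates the corresponding Riccati equation. Standard asymptotics of the distance function give $r\Delta r\to n-1$ as $r\to 0^+$, and $\nabla f\cdot\nabla r$ is bounded near $x$, so $r\phi(r)\to n-1\leq m+n-1$. Setting $h(r):=r\phi(r)-(m+n-1)$, one checks directly from the Riccati inequality that at any $r_0>0$ with $h(r_0)=0$ the derivative $h'(r_0)$ is nonpositive; combined with $h(0^+)\leq 0$, this forces $h(r)\leq 0$ up to the first cut point, which is exactly the claimed bound $\Delta_f r\leq (m+n-1)/r$.

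The principal obstacle is the algebraic step combining Cauchy--Schwarz with the quadratic term $b^2/m$ from Bochner into a single lower bound proportional to $\phi(r)^2$. It is here that the effective dimension $m+n-1$ arises, and here that one sees transparently why the estimate degenerates as $m\to\infty$: with the quadratic term absent, $\phi$ and $b$ can no longer be controlled simultaneously, consistent with the author's earlier remark that the Laplacian comparison does not extend to the gradient Ricci soliton limit.
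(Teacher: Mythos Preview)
Your argument is correct and is precisely the standard derivation from the Bochner formula that the paper alludes to; note that the paper does not actually supply a proof here but only remarks that ``one can quickly derive from the Bochner formula the Laplacian comparison theorem'' and cites the survey of Wei--Wylie. The one step worth tightening is the final ODE comparison: rather than the $h(r)=r\phi(r)-(m+n-1)$ argument, it is cleaner to observe that while $\phi>0$ the Riccati inequality reads $(\phi^{-1})'\geq 1/(m+n-1)$, and integrating from $0^+$ (where $\phi^{-1}\to 0$) gives $\phi^{-1}(r)\geq r/(m+n-1)$ directly; once $\phi$ hits zero the inequality $\phi'\leq -\phi^2/(m+n-1)\leq 0$ keeps it nonpositive, so the bound holds trivially thereafter.
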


With these tools, we are then able to prove our gradient estimate:

\begin{proof}[Proof of Theorem~\ref{gradient}]

Applying the Bochner formula to $f$, we see that
\[ \frac{1}{2}\Delta_f|\nabla f|^2 \geq |\nabla^2 f|^2 + \phi^\prime(f)|\nabla f|^2 + \frac{1}{m}|\nabla f|^4 \]
On the other hand, the Cauchy-Schwarz inequality yields
\begin{align*}
|\nabla^2 f|^2 & \geq \frac{1}{n}(\Delta f)^2 \\
& = \frac{1}{n}(\Delta_f f + |\nabla f|^2)^2 \\
& = \frac{1}{n}\phi^2(f) + \frac{2}{n}\phi(f)|\nabla f|^2 + \frac{1}{n}|\nabla f|^4 \\
& \geq \frac{1}{n}|\nabla f|^4 + \frac{2}{n}\phi(f)|\nabla f|^2 .
\end{align*}
Hence, using the assumption on $\phi$, we arrive at the estimate
\[ \Delta_f |\nabla f|^2 \geq \frac{2}{n}|\nabla f|^4 . \]

Now consider the function $F=(a^2-r^2)^2|\nabla f|^2$ defined on $B(x,a)$, with $r(y)=d(x,y)$ the radial distance function.  Then there is a point $x_0$ in the interior of $B(x,a)$ such that $F$ achieves its maximum at $x_0$.  Using the method of support functions if necessary, we may assume that $x_0$ lies outside the cut locus of $x$.  At the point $x_0$, we necessarily have
\begin{align*}
\frac{d|\nabla f|^2}{|\nabla f|^2} & = \frac{2d(r^2)}{a^2-r^2} \\
0 & \geq -\frac{2\Delta_f r^2}{a^2-r^2} + \frac{\Delta_f|\nabla f|^2}{|\nabla f|^2} + 2\frac{|\nabla r^2|^2}{(a^2-r^2)^2} - \frac{4\lp\nabla|\nabla f|^2,\nabla r^2\rp}{(a^2-r^2)|\nabla f|^2} .
\end{align*}
The Laplacian comparison theorem yields $\Delta_f r^2\leq 2(m+n)$, and so combining the above inequalities with the estimate for $\Delta_f|\nabla f|^2$, we see that
\[ 0 \geq \frac{2}{n}|\nabla f|^2 - \frac{4(m+n)}{a^2-r^2} - \frac{24r^2}{(a^2-r^2)^2} . \]
Multiplying through by $(a^2-r^2)^2$, we see that
\[ 0 \geq \frac{2}{n}F - 4(m+n+6)a^2 \]
and so
\[ \sup_{B(x,a)} (a^2-r^2)^2|\nabla f|^2 \leq 2n(m+n+6)a^2 . \]
In particular, $a^4|\nabla f|^2(x) \leq 2n(m+n+6)a^2$.
\end{proof}

In particular, the above theorem can be applied to manifolds satisfying the hypotheses of Theorem~\ref{main_thm} with $m<\infty$ to achieve a Liouville-type theorem:

\begin{cor}
\label{liouville}
Let $(M,g)$ be a complete Riemannian manifold and let $0<m<\infty$ and $f\in C^\infty(M)$ be such that $\Ric_f^m\geq 0$ and $\Delta_f f = c_1\exp(c_2f)$ for $c_1,c_2\geq 0$.  Then $f$ is constant.
\end{cor}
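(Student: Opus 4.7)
The plan is to apply Theorem~\ref{gradient} directly and then send the ball radius to infinity. The only real content is verifying the hypothesis on the reaction term, so there is not much of a substantive obstacle here; Corollary~\ref{liouville} is the easy finite-$m$ case, and the harder $m=\infty$ case will have to be reduced to it via the conformal change in Corollary~\ref{conf_relation}.

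First I would set $\phi(t)=c_1\exp(c_2 t)$ so that the PDE $\Delta_f f = \phi(f)$ is exactly the hypothesis of Theorem~\ref{gradient}. A one-line check gives
\[ \phi^\prime(t) + \frac{2}{n}\phi(t) = c_1\Bigl(c_2 + \frac{2}{n}\Bigr)\exp(c_2 t) \geq 0, \]
using $c_1,c_2 \geq 0$. Combined with the standing assumption $\Ric_f^m\geq 0$ and $m<\infty$, the full set of hypotheses of Theorem~\ref{gradient} is satisfied.

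Next I would use the completeness of $(M,g)$: by Hopf-Rinow every closed metric ball $\overline{B(x,a)}$ is compact, and any two points of $M$ are joined by a minimizing geodesic lying in $M$, so $B(x,a)$ is automatically geodesically connected in $M$ in the sense of the paper. Theorem~\ref{gradient} then yields, for every $x\in M$ and every $a>0$,
\[ |\nabla f|^2(x) \leq \frac{2n(m+n+6)}{a^2}. \]

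Finally, since $m$ and $n$ are fixed and the right-hand side tends to $0$ as $a\to\infty$, we conclude $|\nabla f|^2(x)=0$ at every $x\in M$, so $f$ is constant. The only thing to watch is to make sure the gradient estimate is applied pointwise at an arbitrary $x$ with a free choice of $a$, which is exactly the form in which Theorem~\ref{gradient} is stated.
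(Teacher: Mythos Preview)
Your proposal is correct and follows exactly the paper's own argument: verify that $\phi(t)=c_1\exp(c_2 t)$ satisfies $\phi'+\tfrac{2}{n}\phi\geq 0$, apply Theorem~\ref{gradient}, and let $a\to\infty$ using completeness. Your added remarks about Hopf--Rinow justifying the compactness and geodesic-connectedness hypotheses are a helpful elaboration but do not change the approach.
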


\begin{proof}

The function $\phi(t)=c_1\exp(c_2t)$ clearly satisfies $\phi^\prime+\frac{2}{n}\phi\geq 0$, and so we can apply Theorem \ref{gradient}.  Because $(M,g)$ is complete, we may take $a\to\infty$, which yields the result.
\end{proof}

\section{Proof of Theorem~\ref{main_thm}}

If $m<\infty$, Corollary~\ref{liouville} implies that $f$ is constant.  On the other hand, if $m=\infty$, choose some $\cm<\infty$ and let $(M,\cg,\cf)$ be the conformally related triple defined by
\[ \cg = \exp\left(-\frac{2}{\cm+n-2}f\right)g , \qquad \cf = \frac{\cm}{\cm+n-2}f . \]
Corollary~\ref{conf_relation} then implies that there are constants $\cc_1,\cc_2$ such that
\begin{align*}
\Ric_{\cf}^{\cm}(\cg) & \geq 0 \\
\cD_{\cf}\cf & = \cc_1 \exp(\cc_2 f) .
\end{align*}
If $(M,\cg)$ is complete, then applying Corollary~\ref{liouville} again yields the desired result.  However, a priori we do not know that $(M,\cg)$ is complete, and so we turn to addressing this issue.

Define
\[ R=\sup\{r\colon B(x_0,r)\mbox{ is compactly contained in } M\} . \]
If $R=\infty$, then $(M,\cg)$ is complete, so suppose instead that $R<\infty$.  Thus there is an inextendible unit speed $\cg$-geodesic $\gamma\colon[0,R)\to (M,\cg)$ with $\gamma(0)=x_0$.  Because $(M,g)$ is complete, we know that
\[ L_g(\gamma) = \int_0^R \exp\left(\frac{\cf(\gamma(t))}{\cm}\right) dt = \infty . \]
We will derive a contradiction by showing that $L_g(\gamma)<\infty$.  To that end, let $t\in[0,R)$, and consider the $\cg$-geodesic balls $B_t=B(\gamma(t),(R-t)/2)$.  The triangle inequality implies $B_t$ is geodesically connected in $(M,\cg)$, and so we can apply our gradient estimate to $f$ in $B_t$, yielding
\[ |\tilde\nabla \cf|(\gamma(t)) \leq \frac{C}{R-t} , \]
where $C=\sqrt{8n(\cm+n+6)}$.  In particular, $C\in O(\cm^{1/2})$.  Integrating the above inequality yields
\[ \exp\left(\frac{\cf(\gamma(t))}{\cm}\right) \leq C_1(R-t)^{-C/\cm} \]
for some constant $C_1>0$.  Since $C/\cm\in O(\cm^{-1/2})$, we can choose $\cm$ sufficiently large such that $C/\cm<1$ . Hence $L_g(\gamma)<\infty$, a contradiction, and so we see that $(M,\cg)$ must be complete. \qed

\bibliographystyle{abbrv}
\bibliography{../bib}

\begin{thebibliography}{10}

\bibitem{Anderson_1999}
M.~T. Anderson.
\newblock Scalar curvature, metric degenerations and the static vacuum
  {E}instein equations on {$3$}-manifolds. {I}.
\newblock {\em Geom. Funct. Anal.}, 9(5):855--967, 1999.

\bibitem{Besse}
A.~L. Besse.
\newblock {\em Einstein manifolds}, volume~10 of {\em Ergebnisse der Mathematik
  und ihrer Grenzgebiete (3)}.
\newblock Springer-Verlag, Berlin, 1987.

\bibitem{Case_Shu_Wei}
J.~S. Case, Y.~Shu, and G.~Wei.
\newblock Rigidity of quasi-{E}instein metrics.
\newblock arXiv:0805.3132.

\bibitem{Chen2007}
B.-L. Chen.
\newblock Strong uniqueness of the {R}icci flow.
\newblock {\em J. Differential Geom.}, 82(2):363--382, 2009.

\bibitem{Chen2009}
D.~Chen.
\newblock Construction of conformally compact {E}instein metrics.
\newblock arXiv:0908.1430.

\bibitem{Cleyton}
R.~Cleyton.
\newblock Riemannian products which are conformally equivalenty to {E}instein
  metrics.
\newblock arXiv:0805.3630.

\bibitem{DancerWang2009}
A.~S. Dancer and M.~Y. Wang.
\newblock Some new examples of non-{K}\"ahler {R}icci solitons.
\newblock {\em Math. Res. Lett.}, 16(2):349--363, 2009.

\bibitem{Ivey_1994}
T.~Ivey.
\newblock New examples of complete {R}icci solitons.
\newblock {\em Proc. Amer. Math. Soc.}, 122(1):241--245, 1994.

\bibitem{Kim_Kim}
D.-S. Kim and Y.~H. Kim.
\newblock Compact {E}instein warped product spaces with nonpositive scalar
  curvature.
\newblock {\em Proc. Amer. Math. Soc.}, 131(8):2573--2576 (electronic), 2003.

\bibitem{Li_XD_2005}
X.-D. Li.
\newblock Liouville theorems for symmetric diffusion operators on complete
  {R}iemannian manifolds.
\newblock {\em J. Math. Pures Appl. (9)}, 84(10):1295--1361, 2005.

\bibitem{LPP}
H.~L{\"u}, D.~N. Page, and C.~N. Pope.
\newblock New inhomogeneous {E}instein metrics on sphere bundles over
  {E}instein-{K}\"ahler manifolds.
\newblock {\em Phys. Lett. B}, 593(1-4):218--226, 2004.

\bibitem{Schoen_Yau_book}
R.~Schoen and S.-T. Yau.
\newblock {\em Lectures on differential geometry}.
\newblock Conference Proceedings and Lecture Notes in Geometry and Topology, I.
  International Press, Cambridge, MA, 1994.

\bibitem{Wald_book}
R.~M. Wald.
\newblock {\em General relativity}.
\newblock University of Chicago Press, Chicago, IL, 1984.

\bibitem{Wei_Wylie_survey}
G.~Wei and W.~Wylie.
\newblock Comparison geometry for the smooth metric measure spaces.
\newblock In {\em ICCM 2007. {V}ol. {II}}. Higher Education Press.

\bibitem{Zhang2008}
Z.-H. Zhang.
\newblock On the completeness of gradient {R}icci solitons.
\newblock {\em Proc. Amer. Math. Soc.}, 137(8):2755--2759, 2009.

\end{thebibliography}
\end{document}